\documentclass[11pt]{article}
\usepackage{amsmath,amssymb,amsthm,amsfonts,amstext,amsbsy,amscd,color}
\usepackage{cases}
\usepackage{cancel}
\usepackage[dvipsnames]{xcolor}
\usepackage{graphicx}
\usepackage[utf8]{inputenc}
\usepackage{mathtools}
\usepackage{marvosym}
\usepackage{manfnt}
\usepackage{bm}
\usepackage[labelfont=bf]{caption}

\usepackage[
  backend=biber,   
  style=numeric,   
  giveninits=true  
]{biblatex}

\usepackage{bbm}
\usepackage{enumitem}
\usepackage{amsmath}
\usepackage{amssymb}
\usepackage{hyperref}
\usepackage[notcite, notref]{showkeys}
\usepackage[left=1cm,right=2.5cm,top=2cm,bottom=2.5cm]{geometry}
\usepackage{subcaption}
\definecolor{myorange}{rgb}{0.9568,0.4941,0.1961}

\def\<{\langle}
\def\>{\rangle}

\def\Chi{\raise .3ex
\hbox{\large $\chi$}}

\newcommand{\dd}{\text{d}}
\newcommand{\dst}{\displaystyle}
\newcommand{\be}{\begin{equation}}

\newcommand{\f}{\frac}
\newcommand{\ee}{\end{equation}}
\newcommand{\bea}{$$ \begin{array}{lll}}
\newcommand{\eea}{\end{array} $$}
\newcommand{\bi}{\begin{itemize}}
\newcommand{\ei}{\end{itemize}}

\newcommand\blfootnote[1]{%
	\begingroup
	\renewcommand\thefootnote{}\footnote{#1}%
	\addtocounter{footnote}{-1}%
	\endgroup
}
\numberwithin{equation}{section}
\newtheorem{satz}{Satz}[section]
\newtheorem{theo}[satz]{Theorem}

\newtheorem{lemma}{Lemma}

\newtheorem{definition}{Definition}

\newtheorem{remark}{Remark}

\DeclareMathOperator{\R}{{\mathbb R}}

\providecommand{\eps}{\varepsilon}

\renewenvironment{proof}{\noindent{\bf Proof.}}{\hfill
  $\blacksquare$\par\noindent}

\begin{document}
%\dateposted{demain}
%\thanks{}
\title{Steady States of Transport-Coagulation-Nucleation Models}

\author{J. Delacour\footnote{ENS de Lyon site Monod,
UMPA UMR 5669 CNRS
46, allée d’Italie
69364 Lyon Cedex 07, France.} \and M. Doumic{{$^\dag$}}\footnote{Sorbonne Universités, Inria, UPMC Univ Paris 06, Lab. J.L. Lions  UMR CNRS 7598, Paris, France.}\footnote{École polytechnique, CMAP, Rte de Saclay, 91120 Palaiseau, France.} \and C. Moschella\footnote{
Mathematical Institute, University of Oxford,
Andrew Wiles Building, Radcliffe Observatory Quarter,
Woodstock Road, Oxford OX2 6GG, UK.
} \footnote{University of Vienna, Faculty of Mathematics, Oskar-Morgenstern-Platz 1, 1090 Vienna, Austria.} \and  C. Schmeiser{{$^\P$}}}

\maketitle

	\vspace{-11pt}
	\begin{abstract}
		\noindent\small
To model the dynamics of polymers formed through nucleation, elongated  by polymerisation, shortened by depolymerisation and subject to aggregation reactions, we study a nonlinear integro-differential equation. Growth and shrinkage are described by transport terms,  nucleation by a positive boundary condition, and aggregation by a Smoluchowski coagulation kernel. Our main result is the existence of steady states for the multiplicative coagulation kernel despite this   kernel producing gelation in finite time for the pure coagulation equation. This is made possible by a sufficiently strong decay rate for large polymers.
Beyond the existence result, the qualitative properties of the steady states are illustrated through explicit examples and  numerical experiments. The analytical results connect the
growth behaviour of the transport velocity and of the coagulation kernel to the decay properties of steady states.
		\blfootnote{\emph{Keywords and phrases.} transport, coagulation, steady states, polymerisation, nucleation, growth-coagulation equation}
		\blfootnote{\emph{2020 Mathematics Subject Classification.} 35Q92, 82C22, 82D30, 82B40}
	\end{abstract}

%\tableofcontents

\section{Introduction} 
Coagulation equations constitute a fundamental class of integro-differential equations that describe the evolution of size-distributed particle systems undergoing binary aggregation. These models have been widely applied across the physical sciences, particularly in the context of aerosol dynamics \cite{drake1972general}, where the interaction and aggregation of gas particles or droplets are governed by collision mechanisms encoded in a coagulation kernel \cite{friedlander2000smoke, seinfeld2016atmospheric}. In the biological sciences, coagulation-type equations have been employed to model processes such as the polymerisation of macromolecules \cite{ziff1980kinetics}.
Given these numerous applications, coagulation equations -- and more generally, coagulation-fragmentation equations (see, e.g., \cite{banasiak2019analytic, book} for comprehensive references on the topic) -- have been extensively studied from both analytical and numerical perspectives. In contrast, coagulation equations incorporating a transport term in the size variable have received significantly less attention, despite their relevance in modelling systems where particles grow or shrink continuously in addition to coalescence. Such transport term, analogous to those used in structured population models such as the McKendrick-von Foerster equation \cite{ackleh1997modeling, perthame2006transport}, may arise naturally in contexts where aggregates self-grow by surface accretion or shrink through degradation. An example and motivation for this work is the formation of protein aggregates in cells
as a preparation for their recycling by autophagy \cite{delacour2022mathematical}.

We are concerned with the existence of steady state solutions of the following coagulation-transport-nucleation model:
\large
\begin{eqnarray}
%\begin{array}{ll}
\label{eq:main}
\partial_t f (t,x) + \partial_x\big(v(x)f(t,x)\big) &=&\f{1}{2} \int\limits_0^x K(x-y,y) f(t,x-y)f(t,y)\dd y \\
&&-\int\limits_0^\infty K(x,y) f(t,y) f(t,x) \dd y\,, \nonumber
\\
f(t,0)&=&f_0\,,
%\end{array}
\label{eq:main2}
\end{eqnarray}
\normalsize
where \(f(t,x)\) is the particle size distribution function at time \(t\ge0\), with particle size \(x\in[0,\infty)\).

The function $v(x) \in C^1(0,\infty)$ 
represents the size-dependent transport velocity (i.e. the rate of  growth or shrinkage of the particles by addition or depletion of monomers).  Contrarily to the case of the Lifshitz-Slyozov equation~\cite{laurencot2001lifshitz,laurencot2001weak}, the concentration of monomers is assumed to be constant so that the transport rate only depends on the size of the clusters.

The boundary condition $f(t,0)=f_0>0$ models nucleation, i.e. the continuous injection of monomeric units into the system, as observed in biological contexts such as protein 
synthesis, cell renewal, or polymer production~\cite{calvo2018long,calvo2024long,prigent2012efficient}. It only makes sense if small polymers are not decomposed immediately, i.e. if $v(x)>0$ close to $x=0$. 

We define $K(x,y)$ as a symmetric coagulation kernel describing the coagulation rate between particles of size $x$ and $y$. We assume that $K$ satisfies
\begin{equation*}
\forall \, x\geq 0,\, y\geq 0, \qquad K(x,y)=K(y,x),  
\end{equation*}
which formally implies, as long as all quantities are integrable:
\begin{equation}\label{eq:balance}
\left\{ \begin{array}{ll}
\f{\dd}{\dd t} M_0 \coloneqq \f{\dd}{\dd t} \dst\int_0^\infty f(t,x) \dd x = v(0)f_0-\f{1}{2}\int_0^\infty\int_0^\infty K(x,y) f(t,x)f(t,y) \dd x\dd y,
\\ \\
\f{\dd}{\dd t} M_1 \coloneqq \f{\dd}{\dd t} \dst \int_0^\infty x f(t,x) \dd x = \int_0^\infty v(x) f(t,x) \dd x,
\end{array}\right.
\end{equation}
where in the first equation we have assumed that the flux at infinity vanishes, i.e.,
\be\label{BC-inf-0}
\lim_{x\to\infty} v(x) f(t,x) = 0  \,.
\ee
The first equation shows the total balance of the number of individuals, increased by nucleation and decreased by coagulation, whereas the second shows that the change in the total mass is only due to the transport term. These balance equations remain true as long as no particle is lost by gelation, i.e. the formation of infinite-size particles by coagulation~\cite{hendriks1983coagulation}, which can also happen instantaneously~\cite{van1987possible,cristian2025note}.

The problem of the existence of steady-state solutions has been extensively tackled in the context of coagulation-fragmentation equations. When both coagulation and fragmentation processes are present, the dynamics are reversible, and detailed balance conditions can often be written explicitly (see, e.g., Section 2.3.3 of \cite{banasiak2019analytic}). In contrast, when only coagulation or only fragmentation occurs, one might intuitively expect that no non-trivial steady states can exist, due to the inherently irreversible nature of these dynamics. Surprisingly, this intuition fails in certain cases: exact steady-state solutions have been constructed for specific classes of coagulation and multiple-fragmentation kernels, as shown in \cite{dubovskii1992exact}. In a related direction, the existence and characterization of steady states  has also been investigated for transport-fragmentation models \cite{doumic2010eigenelements}, where the eigenelements of the associated eigenproblem are used to analyze the long-time behaviour of the system~\cite{perthame2006transport}. Furthermore, the existence of stationary non-equilibrium solutions has been established in \cite{ferreira2021stationary} for coagulation equations -- both in the discrete and continuous setting -- including a source term representing the continuous injection of small-sized particles. This scenario shares structural similarities with our setting, where the boundary condition at zero models the persistent input of monomeric units.

 The growth-coagulation equation~\eqref{eq:main} was first introduced by Lifshitz and Slyozov~\cite{LifshitzSlyozov_1961}, with a nonlinear  coupling of the equation with the monomers dynamics through the growth term, and no nucleation since $v(0)$ was assumed nonpositive. This model is also called the "Lifshitz-Slyozov equation with encounters". Studies have focused on well-posedness~\cite{collet1999lifshitz,laurencot2001lifshitz}, self-similar solutions~\cite{herrmanna2009self}, and numerical solution to investigate by simulations the large time asymptotics~\cite{goudon2013simulations}. Other articles have focused on the well-posedness for the growth-coagulation equation with an everywhere nonnegative growth rate $v$, either with a nonlocal nucleation term coming from larger aggregates like in the renewal equation, for instance to model phytoplankton aggregation~\cite{ackleh1997modeling,ackleh2006existence}, or with no nucleation~\cite{giri2025well}.

The aim of this work is to identify conditions on the transport velocity, under which \eqref{eq:main} admits non-trivial steady-state profiles, especially in the case of the multiplicative coagulation 
kernel $K(x,y)=xy$ for which it has been proven that gelation occurs in finite time~\cite{EscobedoMischlerPerthame_2002}. We shall see in the following section that this is not the case for an everywhere positive transport velocity, since this leads to gelation in finite time. Therefore
it will be assumed that large polymers tend to shrink, i.e. the transport velocity $v(x)$ is negative for large $x$.  In Section \ref{sec:inverse_problem} the inverse problem of prescribing
a steady state solution and determining corresponding transport velocities will be considered. Several explicit solutions will serve as illustrations. Section \ref{sec:existence_steady} 
contains an existence result for steady states. The main assumption is that the transport velocity is not only negative for large $x$, but tends to $-\infty$ fast enough as $x\to\infty$.
Some qualitative properties of steady states are discussed formally, in particular the fact that there can be a singularity at the zero of the transport velocity. Finally, our results are
illustrated by numerical experiments in Section \ref{sec:num}.

\section{The choice of the transport velocity}\label{sec:gelation}
We start this section by deriving explicit information on the solution of \eqref{eq:main} for the constant positive transport velocity
 \( v \equiv 1 \). In this case both transport and coagulation increase the polymer size, and therefore convergence to a steady state cannot be expected. 
 However the solution behaviour also depends on the coagulation rate. 
 
\paragraph{Constant transport velocity and constant coagulation rate:} 
For $v\equiv K\equiv 1$ we can derive from~\eqref{eq:balance} the closed system of ordinary differential equations (ODEs) 
\[
   \dot M_0 = f_0 - \frac{1}{2}M_0^2 \,,\qquad \dot M_1 = M_0 \,,
\]
governing the time evolution of the moments 
\[
M_0(t) = \int_0^\infty f(t,x)\dd x, \quad M_1(t) = \int_0^\infty x f(t,x)\dd x \,.
\]
Note that the total particle number $M_0$ is influenced by nucleation and coagulation, and the total mass $M_1$ only by the transport.
The total particle number tends to the limit $\sqrt{2f_0}$ as $t\to\infty$, and the total mass grows at an asymptotically linear rate.

\paragraph{Constant transport velocity and multiplicative coagulation rate:} 
For $v\equiv 1$ and $K(x,y)=xy$, the equations for the moments are again a closed system:
\[
   \dot M_0 = f_0 - \frac{1}{2}M_1^2 \,,\qquad \dot M_1 = M_0 \,,
\]
and differentiating the equation for \( M_1 \) yields the second-order Hamiltonian ODE
\begin{equation}\label{eq:second_order_derivative_moment}
\ddot M_1 = f_0 - \frac{1}{2} M_1^2 \,,
\end{equation}
with the first integral
\[
  H = \frac{1}{2} \left( \dot M_1\right)^2 - f_0 M_1 + \frac{1}{6} M_1^3 \,,
\]
whose value is determined by the initial data. The solution of the first order ODE
\[
\dot M_1 = \sqrt{2H + 2 f_0 M_1 - \frac{1}{3} M_1^3}
\]
increases and reaches the zero of the right hand side in finite time $T$ because of the square root. At time $T$ with $M_0(T)=0$ and $M_1(T)>0$ the model breaks down, 
which signals the occurrence of \emph{gelation} in finite time, i.e. all the mass has been moved to $x=\infty$ at $t=T$. 

\paragraph{Choosing a transport velocity with changing sign:}
The observations from above remain true. Equations~\eqref{eq:balance} show that changes of the total particle number are caused by nucleation and coagulation, and changes of the total mass by the transport, 
therefore a steady state needs to balance nucleation and coagulation, but also the production and consumption of monomers by the transport process.
Obviously, the transport velocity needs to change sign so that the right-hand side of the second equation in~\eqref{eq:balance} can vanish.
The simplest possible model assumes the existence of an equilibrium size $x_0>0$ such that
\begin{equation}
\label{eq:velocity_profile}
\begin{aligned}
&v(x) > 0 \,, \quad &&\text{for } 0\le x < x_0 \,, \\
&v(x_0) = 0 \,, \quad &&\text{for } x_0 > 0 \,, \\
&v(x) < 0 \,, \quad &&\text{for } x > x_0 \,.
\end{aligned}
\end{equation}
For well-posedness of the problem, this requires an additional boundary condition at $x=\infty$, such as \eqref{BC-inf-0}.
More generally, we shall pose
\be\label{BC-inf}
\lim_{x \to \infty} v(x) f(x,t) = j_\infty \le 0\,,
\ee
where $j_\infty < 0$ could be interpreted as nucleation of infinite size particles out of a background gel.

\section{The inverse problem -- explicit examples of steady state solutions}\label{sec:inverse_problem}

As a source of illustrative explicit examples we consider the inverse problem of prescribing a steady state $f(x)$ and determining corresponding transport velocities $v(x)$,
such that
\be\label{steadystate}
   \partial_x\big(v(x)f(x)\big) =\f{1}{2} \int\limits_0^x K(x-y,y) f(x-y)f(y)\dd y - \int\limits_0^\infty K(x,y) f(y) f(x) \dd y\,.
\ee
Obviously, $v(x)$ is only unique up to adding multiples of $1/f(x)$. We shall be interested in the solutions with minimal growth of $|v(x)|$ as $x\to\infty$, since we are mainly 
interested in the question, how much growth of $|v(x)|$ is needed for a certain decay behaviour of $f(x)$. Obviously, a constant multiple of $1/f(x)$ produces a
constant contribution to the flux. We shall be looking for $v(x)$ with growth smaller
than $1/f(x)$ which, if it exists, is unique. Note that this corresponds to  $j_\infty=0$ in \eqref{BC-inf}. 
As an equation for $f$, \eqref{steadystate} is singular at zeroes of $v$. Therefore steady states with singularities have to be expected in general, as is shown in the following section. The given smooth steady states considered in this section are  very special and exist only for appropriate combinations of the data 
$f_0$ and $v$.

\paragraph{Exponential steady states:}
For $f(x)= f_0 e^{-\lambda x}$, $\lambda>0$, \eqref{steadystate} becomes
$$
   v' - \lambda v = \f{f_0}{2} \int\limits_0^x K(x-y,y) \dd y - f_0\int\limits_0^\infty K(x,y) e^{-\lambda y} \dd y \,.
$$
If $K$ is polynomial, so is the right hand side, and a polynomial solution $v$ (which is in this case the one we are looking for) can be found by an ansatz. This is how
we proceed in the first three examples below.

\begin{enumerate} 
\item $K(x,y)=1$:\quad $v(x) = \frac{f_0}{2\lambda}\left(\frac{1}{\lambda} - x\right)$\,.
    \item $K(x,y) = x + y$:\quad $v(x) = \frac{f_0}{2\lambda}\left( \frac{\sqrt{2}}{\lambda} + x\right)\left( \frac{\sqrt{2}}{\lambda} - x\right)$\,.
    \item $ K(x,y) = xy$:\quad $v(x) = \frac{f_0}{12\lambda^4}\left(6 + 6\lambda x - 3(\lambda x)^2 - (\lambda x)^3 \right)$\,.\\
    It can be easily checked that $v$ satisfies \eqref{eq:velocity_profile}, as obviously also in the previous 2 examples. In all 3 examples so far the polynomial degree of $v$ is by 
    one higher than the polynomial degree of $K$.
    \item Generalizing 1. and 3., for $K(x,y) = (xy)^\beta$, $\beta\ge 0$, we have to solve
    \begin{equation}\label{eq:v(x)_x^alpha,y^beta}
    v'-\lambda v = \frac{f_0}{2} B(\beta+1,\beta+1)x^{1+2\beta} - \frac{f_0}{\lambda^{\beta+1}}\Gamma(\beta+1)x^\beta\,,
\end{equation}
with the Beta-function $B$ and the Gamma-function $\Gamma$. At least formally, it is straightforward to construct an asymptotic expansion for a solution:
$$
   v(x) = - \frac{f_0}{2\lambda} B(\beta+1,\beta+1)x^{1+2\beta} + O(x^{2\beta}) \,,\qquad \text{as } x\to\infty \,.
$$
Note that the above mentioned observation deg$(v)=$ deg$(K)+1$ still holds.
\end{enumerate}

\paragraph{Steady states with algebraic decay:}
We consider \eqref{steadystate} with $f(x) = f_0 (1+x^\alpha)^{-1}$ with $\alpha>1$ for integrability.
\begin{enumerate}
    \item
    For $K(x,y)=(xy)^\beta$, $\beta\ge 0$, $\alpha>\beta+1$, we obtain
 \be\label{steady-alg}
    \frac{1}{f_0}\partial_x\left(\frac{v(x)}{1+x^\alpha}\right)= \int_0^{x/2} \frac{(x-y)^\beta}{1+(x-y)^\alpha} \frac{y^\beta}{1+y^\alpha}\,\dd y - \frac{x^\beta}{1+x^\alpha}\int_0^\infty \frac{y^\beta}{1+y^\alpha}\,\dd y \,.
\ee
We shall investigate the asymptotic behaviour of the right hand side as $x\to\infty$. This is subtle, since both terms are asymptotically equivalent to 
$$
   x^{\beta-\alpha} \int_0^\infty \frac{y^\beta}{1+y^\alpha}\,\dd y \,.
$$
The deviation of the first term from this is given by 
\begin{align*}
  & \int_0^{x/2} \frac{(x-y)^\beta}{1+(x-y)^\alpha} \frac{y^\beta}{1+y^\alpha}\,\dd y - x^{\beta-\alpha} \int_0^\infty \frac{y^\beta}{1+y^\alpha}\,\dd y \\
  &= x^{\beta-\alpha} \int_0^{x/2} \left( \frac{(1-y/x)^\beta}{x^{-\alpha}+(1-y/x)^\alpha} - 1\right) \frac{y^\beta}{1+y^\alpha}\,\dd y - x^{\beta-\alpha} \int_{x/2}^\infty \frac{y^\beta}{1+y^\alpha}\,\dd y    
  =: I_1 - I_2 \,.
\end{align*}
For the first term on the right hand side there are 2 cases: For $\alpha>\beta+2$ we have
$$
  I_1 \approx (\alpha-\beta)x^{\beta-\alpha-1} \int_0^\infty \frac{y^{\beta+1}}{1+y^\alpha}\,\dd y \,.
$$
The case $\beta+1<\alpha<\beta+2$ is a little more involved:
$$
  I_1 \approx x^{2\beta-2\alpha+1} \int_0^{1/2} \left( (1-z)^{\beta-\alpha} - 1\right)z^{\beta-\alpha}\dd z \,.
$$
Note that the existence of the integral relies on the assumption $\alpha<\beta+2$. 

We also have
$$
  I_2 \approx \frac{2^{\alpha-\beta-1}}{\alpha-\beta-1} x^{2\beta-2\alpha+1} \,.
$$
In the first case above, this is small compared to $I_1$; in the second case it is of the same order of magnitude.

For the last term in \eqref{steady-alg} we obtain
$$
  \frac{x^\beta}{1+x^\alpha}\int_0^\infty \frac{y^\beta}{1+y^\alpha}\,\dd y -
  x^{\beta-\alpha} \int_0^\infty \frac{y^\beta}{1+y^\alpha}\,\dd y = O(x^{\beta-2\alpha}) \,,
$$
which, in both cases, is small compared to $I_1$. Now we use these results in \eqref{steady-alg}, which we integrate from $x$ to $\infty$ with the assumption $j_\infty=0$:
$$
   v(x) \approx \left\{ \begin{array}{ll} 
   -f_0 c_{\alpha,\beta}\, x^\beta \,,& \text{for }\alpha-\beta>2 \,,\\
   -f_0 \tilde c_{\alpha,\beta}\, x^{2\beta-\alpha+2} \,,& \text{for } 1<\alpha-\beta<2 \,,
   \end{array}\right.
$$
with
$$
  c_{\alpha,\beta} = \int_0^\infty \frac{y^{\beta+1}}{1+y^\alpha}\,\dd y \,,\qquad
  \tilde c_{\alpha,\beta} = \frac{1}{2(\alpha-\beta-1)} \int_0^{1/2} \left(1- (1-z)^{\beta-\alpha}\right)z^{\beta-\alpha} \dd z + \frac{2^{\alpha-\beta-2}}{(\alpha-\beta-1)^2}\,.
$$

\item For $K(x,y)=x+y$ and now with the assumption $\alpha>2$, analogous arguments lead to
$$
   v(x) \approx -f_0 c_\alpha x \,,\qquad\text{with } 
   c_\alpha = \frac{\alpha}{\alpha-1} \int_0^\infty \frac{y}{1+y^\alpha}\,\dd y >0\,.
$$
\end{enumerate}

We conclude this section by collecting our results for the multiplicative kernels
$K(x,y)=(xy)^\beta$, $\beta\ge 0$, from the point of view of the forward problem
of finding $f$ for given $v$. It turns out that the picture is rather counter-intuitive and confusing, since the intuition that stronger growth of $v$ leads
to faster decay of $f$ is only partially true. 

Let the velocity satisfy 
$$
  v(x) \approx -f_0 c\, x^\gamma \,,\qquad\text{as } x\to\infty \,.
$$
If $\gamma=1+2\beta$, we expect exponential decay of $f$, whereas when $\beta\le\gamma < \beta+1$, we expect algebraic decay. This is a partial result, where the intuition is true. However, in the exponential case, from the above computations we expect a rate $\lambda = \frac{B(\beta+1,\beta+1)}{2c}$ and, thus,
stronger decay for a smaller constant in $v$.

For $\gamma=\beta$ we expect algebraic decay of $f$ like $x^{-\alpha}$ with $\alpha\in (\beta+2,\infty)$ determined from $c_{\alpha,\beta}=c$. Here again the intuition fails, since $c_{\alpha,\beta}$ is decreasing in $\alpha$ at least close
to the ends of the $\alpha$-interval.

Finally the last counter-intuitive result: For $\beta<\gamma<\beta+1$, we expect 
algebraic decay with a rate $\alpha = 2\beta+2-\gamma$. And very finally an open question: What if, in the last case, $c\ne \tilde c_{\alpha,\beta}$?

\section{Existence and properties of steady states \label{sec:existence_steady}}
%\subsection{Case of the multiplicative kernel \texorpdfstring{$\boldsymbol{{K(x,y)=xy}}$}{TEXT}}\label{sec:existence_multiplicative}

The stationary transport coagulation problem with multiplicative kernel reads
\begin{equation}
\begin{array}{ll}
&\partial_x (v(x)f(x)) = \frac{1}{2} \dst\int_0^x (x-y)f(x-y)y f(y)\dd y - \int_0^\infty x f(x) y f(y) \dd y \,, \label{eq_1s} 
\\ 
& f(0)=f_0 \,.
\end{array}
\end{equation}
As explained in Section \ref{sec:gelation}, to ensure the existence of non-trivial steady-state solutions, the function $v(x)$ is assumed to exhibit a single transversal sign change,  namely 
\begin{equation}\label{eq_3s}
v(x) = (x_0-x)w(x)\,, \qquad \, w \in C^1([0,+\infty))\,, \qquad w(x)>0 \quad\text{for } x\geq0\,.
\end{equation}
We shall look for a solution, where the transport flux $j= vf$ tends to a nonnegative value as $x\to\infty$. Therefore we complete the problem formulation by 
\be\label{BC-infty}
  \lim_{x\to\infty} v(x)f(x) = j_\infty \le 0 \,.
\ee
At first glance it looks wrong to have two boundary conditions for a first order ODE. However, due to the sign change of the velocity, this is a singular ODE,
where the problems for $x\in [0,x_0)$ and for $x\in (x_0,\infty)$ can be treated separately. Actually the problem on $[0,x_0)$ is closed and its solution can be seen as  
data for the problem on $(x_0,\infty)$.

With \eqref{BC-infty}, formal integration of \eqref{eq_1s} leads to the equation
$$
  j_\infty -v(0)f_0 = -\frac{1}{2}M_1[f]^2\,,
$$
for the first order moment
$$
   M_1[f] := \int_0^\infty xf(x)\dd x \,.
$$
Therefore the value $\mathcal{M}_1 := \sqrt{2(v(0)f_0 - j_\infty)}$ will be fixed from now on. For existence of a non-trivial stationary solution, the transport needs to be strong enough to
counteract the coagulation. This will be guaranteed by an additional assumption:
\begin{equation}\label{v-ass}
  \exists A>0,\, \overline x > x_0,\, \beta >2:\quad v(x) \ge -Ax^\beta \quad\text{for } x\ge \overline x \,.
\end{equation}
We now define the notion of weak stationary solutions to the transport  coagulation equation \eqref{eq_1s} to be used in the sequel.

\begin{definition}\label{def:weak_sol} A weak solution of \eqref{eq_1s}, \eqref{BC-infty} is a function $f\ge 0$ such that $xf\in L^1(\R^+)$ and
\begin{eqnarray*}
   && \Phi(\infty)j_\infty - \Phi(0)v(0)f_0 -\int_0^\infty \Phi'(x) v(x)f(x)\dd x  \\
   && = \frac{1}{2} \int_0^\infty \int_0^\infty xf(x)y f(y)\Phi(x+y)\dd x\,\dd y -  \mathcal{M}_1\int_0^\infty xf(x)\Phi(x) \dd x \,,       
\end{eqnarray*}
for all $\Phi \in C^1_B([0, \infty))$, where $\Phi'$ has compact support.
\end{definition}

The difficulties we face are connected to the singularity at $x=x_0$ and
to the behaviour as $x\to\infty$. Therefore we first consider a regularised version
of \eqref{eq_1s}, \eqref{BC-infty} with the transport flux as the unknown:
\begin{equation}
\begin{array}{ll}
&\partial_x j_\eps + \frac{x\mathcal{M}_1}{v_\eps}j_\eps = h\left[j_\eps/v_\eps\right] \,, \label{eq_1-reg} \\ 
& j_\eps(0)=v(0)f_0 \,,\qquad j_\eps(1/\eps) = j_\infty\,,
\end{array}
\end{equation}
with $0<\eps<1/x_0$, with
$$
v_\eps(x) = \left\{ \begin{array}{ll} v(x)+\eps &\text{for } x\le x_0 \,,\\
             v(x)-\eps &\text{for } x>x_0 \,,\end{array}\right.
$$
implying $|v_\eps|\ge \eps$, and with
$$
  h[f](x) := \frac{1}{2} \dst\int_0^x (x-y)f(x-y) y f(y)\dd y \,.
$$
This defines $j_\eps$ on $[0,1/\eps]$, and it is 
extended to $[0,\infty)$ by the definition $j_\eps=0$ on $(1/\eps,\infty)$.
The regularised problem can be written as a fixed point problem:
\begin{equation}\label{eq:g_formula_reg}
  j_\eps(x) = F_\eps[j_\eps](x) := \left\{ \begin{array}{ll} 
  G_\eps(x,0)v(0)f_0 + \int_0^x G_\eps(x,y)h\left[j_\eps/v_\eps\right](y)\dd y \quad & \text{for }0\le x\le x_0\,,\\
  G_\eps(x,1/\eps)j_\infty -\int_x^{1/\eps} G_\eps(x,y) h\left[j_\eps/v_\eps\right](y)\dd y & \text{for }x_0<x\le 1/\eps\,,\\
  0 & \text{for } x> 1/\eps \,,
     \end{array}\right.
\end{equation}
with 
$$
  G_\eps(x,y) = \exp\left(-\mathcal{M}_1 \int_y^x \frac{z}{v_\eps(z)}\dd z\right) \,.
$$
Formal integration of \eqref{eq_1-reg} gives
$$
  j_\infty - v(0)f_0 + \mathcal{M}_1\,M_1[F_\eps[j]/v_\eps] = \frac{1}{2} M_1[j/v_\eps]^2  \,.
$$
This shows that the correct value of the first order moment is preserved by the fixed point map:
$$
  M_1[j/v_\eps]=\mathcal{M}_1 \quad\Longrightarrow\quad M_1[F_\eps[j]/v_\eps]=\mathcal{M}_1 \,.
$$

\begin{lemma}\label{lem:ex-reg}
Let assumption \eqref{eq_3s} and $0<\eps < 1/x_0$ hold. Then problem \eqref{eq_1-reg} has a solution $j_\eps\in C^1([0,x_0])\times C^1([x_0,1/\eps])$ with $j_\eps/v_\eps\ge 0$, $M_1[j_\eps/v_\eps]=\mathcal{M}_1$, and
\be\label{jeps-est}
   |j_\eps(x)| \le J := \max\{v(0)f_0,-j_\infty\} + v(0)f_0 - j_\infty \,,\qquad x\ge 0\,.   
\ee
\end{lemma}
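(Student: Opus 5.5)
We plan to apply the Schauder fixed point theorem to the map $F_\eps$ defined in \eqref{eq:g_formula_reg}. The natural setting is the Banach space $X_\eps = C([0,x_0])\times C([x_0,1/\eps])$, where the two pieces are allowed to jump at $x_0$. Since $j_\eps=0$ on $(1/\eps,\infty)$, we work only on $[0,1/\eps]$. On this interval $|v_\eps|\ge\eps$ and $x\le 1/\eps$, so $G_\eps$ is smooth and bounded and $j/v_\eps$ is a bounded function.

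As the invariant set we take the closed convex set
\[
\mathcal K=\Big\{ j\in X_\eps:\ j/v_\eps\ge 0,\ |j|\le J,\ M_1[j/v_\eps]=\mathcal M_1\Big\}.
\]
The sign condition means $j\ge0$ on $[0,x_0]$ and $j\le 0$ on $(x_0,1/\eps]$. Convexity holds because all three constraints are convex; the moment constraint is linear in $j$. The set is nonempty: one can take, for example, $j=c\,v_\eps\chi$ with $\chi\ge0$ a suitable bump and $c$ chosen to fix the moment, which requires checking that $|j|\le J$ can be achieved.

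\textbf{Step 1: invariance.} Let $j\in\mathcal K$ and write $h=h[j/v_\eps]$, which is nonnegative because $j/v_\eps\ge0$.
\begin{itemize}
\item \emph{Signs.} On $[0,x_0]$, $F_\eps[j]\ge0$ since $G_\eps>0$, $v(0)f_0>0$ and $h\ge0$. On $(x_0,1/\eps]$, $F_\eps[j]\le 0$ since $j_\infty\le0$. Hence $F_\eps[j]/v_\eps\ge0$.
\item \emph{Moment.} The moment identity stated just before the lemma gives $M_1[F_\eps[j]/v_\eps]=\mathcal M_1$. We will verify it rigorously by integrating \eqref{eq_1-reg} against $1$ on both pieces, including the boundary term $j(1/\eps)=j_\infty$, and using $\int_0^\infty h\,dx=\tfrac12 M_1^2$ after truncation.
\item \emph{The bound $J$.} This is the main obstacle. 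On $[0,x_0]$ the flux satisfies $(F_\eps[j])'=-\mathcal M_1 x F_\eps[j]/v_\eps+h$, and $F_\eps[j]\ge 0$, so $F_\eps[j](x)\le v(0)f_0+\int_0^x h$. For $x>x_0$ one similarly gets $|F_\eps[j](x)|\le -j_\infty+\int_x^{1/\eps}h$. Both integrals are bounded by $\int_0^\infty h = \tfrac12\mathcal M_1^2 = v(0)f_0-j_\infty$, where the full integral of $h$ counts only pairs with $x+y\le 1/\eps$. These two estimates give exactly $|F_\eps[j]|\le \max\{v(0)f_0,-j_\infty\}+v(0)f_0-j_\infty=J$.
\end{itemize}
The delicate points in this step are handling the damping term with the correct sign, which here works since $x/v_\eps$ and $F_\eps/v_\eps$ are nonnegative, and checking that the truncation at $1/\eps$ does not break the moment identity.

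\textbf{Step 2: continuity and compactness.} Continuity of $F_\eps$ on $\mathcal K$ follows because $j\mapsto j/v_\eps$ is Lipschitz in sup norm with constant $1/\eps$, and the convolution $h$ is continuous from bounded sets of $C$ on a bounded interval. For compactness, $F_\eps[j]$ is $C^1$ on each piece, and its derivative, computed from \eqref{eq_1-reg}, is bounded by
\[
\mathcal M_1\,\eps^{-2}J+\tfrac12\,\eps^{-4}(J/\eps)^2,
\]
uniformly on $\mathcal K$. Arzelà–Ascoli then gives relative compactness of $F_\eps(\mathcal K)$.

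\textbf{Step 3: conclusion.} Schauder's theorem yields a fixed point $j_\eps\in\mathcal K$. The equation holds classically on each piece, so $j_\eps\in C^1([0,x_0])\times C^1([x_0,1/\eps])$. The boundary values $j_\eps(0)=v(0)f_0$ and $j_\eps(1/\eps)=j_\infty$ can be read off from \eqref{eq:g_formula_reg}, and the sign, moment and bound \eqref{jeps-est} are inherited from membership in $\mathcal K$.
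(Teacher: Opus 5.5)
\textbf{There is a genuine gap, and it is in the moment-invariance step.} Your plan otherwise follows the paper's proof closely. You use Schauder's theorem on $C([0,x_0])\times C([x_0,1/\eps])$ with the same set (sign condition, $|j|\le J$, $M_1[j/v_\eps]=\mathcal{M}_1$). Your derivation of $|F_\eps[j]|\le J$ from $F'\le h$ is equivalent to the paper's use of $0<G_\eps\le 1$. The sign preservation, continuity and Arzel\`a--Ascoli steps are fine.

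You take invariance of the moment constraint from the identity stated before the lemma, as the paper does by ``formal integration''. If you actually integrate the equation over the two pieces, two things go wrong: the boundary values at $x_0$ do not cancel, and the truncated source recovers only part of $\frac12 M_1^2$. With $F=F_\eps[j]$ and $f=j/v_\eps$ (supported in $[0,1/\eps]$), the correct identity is
\[
\mathcal{M}_1\, M_1[F/v_\eps] = \tfrac12\mathcal{M}_1^2 + \tfrac12\iint_{x+y\le 1/\eps} xf(x)\,yf(y)\,\dd x\,\dd y - \left(F(x_0^-)-F(x_0^+)\right).
\]
Here $F(x_0^-)\ge G_\eps(x_0,0)v(0)f_0>0$ and $F(x_0^+)\le 0$. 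So for every $\eps>0$ the regularised flux jumps down at $x_0$, i.e.\ $x_0$ acts as an absorbing sink. Hence $M_1[F/v_\eps]<\mathcal{M}_1$ whenever $M_1[j/v_\eps]=\mathcal{M}_1$, and your set is not mapped into itself.

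The problem is worse than a failed step. Apply the same identity to a fixed point $j_\eps$ with $m=M_1[j_\eps/v_\eps]$:
\[
0< j_\eps(x_0^-)-j_\eps(x_0^+) \le \tfrac12\,(m-\mathcal{M}_1)^2 .
\]
So no solution of the regularised problem with $j_\eps/v_\eps\ge 0$ can have $M_1[j_\eps/v_\eps]=\mathcal{M}_1$. The equality claimed in the lemma is unattainable, and neither your argument nor the paper's can deliver it.

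\textbf{What survives is the inequality.} If $M_1[j/v_\eps]\le\mathcal{M}_1$, the identity gives
\[
\mathcal{M}_1 M_1[F/v_\eps]\le \tfrac12 M_1[j/v_\eps]^2+\tfrac12\mathcal{M}_1^2\le \mathcal{M}_1^2 .
\]
The $J$-bound only uses $\int_0^{1/\eps}h[j/v_\eps]\,\dd x\le \frac12 M_1[j/v_\eps]^2\le\frac12\mathcal{M}_1^2$. So apply Schauder on
\[
\{j:\ j/v_\eps\ge0,\ |j|\le J,\ M_1[j/v_\eps]\le\mathcal{M}_1\}.
\]
This set is closed and convex, and it contains $j=0$, which also settles your nonemptiness question. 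It yields a solution with $M_1[j_\eps/v_\eps]\le\mathcal{M}_1$ and $|j_\eps|\le J$.

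The price is that the linearised loss term $x\mathcal{M}_1 f_\eps$ no longer coincides with $xf_\eps M_1[f_\eps]$. The defect $\frac12(\mathcal{M}_1-m)^2$ equals the jump at $x_0$ plus the tail loss $\frac12\iint_{x+y>1/\eps}xf_\eps\, yf_\eps$. You must show this defect vanishes as $\eps\to0$ before passing to the limit. Likewise, the regularised weak formulation in the existence proof needs the extra term $\Phi(x_0)(j_\eps(x_0^-)-j_\eps(x_0^+))$ and a source restricted to $x+y\le 1/\eps$.
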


\begin{remark}
As mentioned earlier, the problem can be separated into two parts. The properties of the solution have to be understood in the sense that the solution can have a jump at $x_0$
with finite one-sided limits.
\end{remark}

\begin{proof}
The proof is a rather simple application of the Schauder fixed point theorem. We start with the observation that for $0\le y\le x\le x_0$ as well as for $x_0\le x\le y$, we have 
that $G_\eps(x,y)$ is continuously differentiable and satisfies $0<G_\eps(x,y)\le 1$. Note that one of these two cases for the arguments of $G_\eps$ always holds in 
\eqref{eq:g_formula_reg}. 

The property $j/v_\eps\ge 0$ is obviously preserved by $F_\eps$, and so is $M_1[j/v_\eps] = \mathcal{M}_1$, as derived above. The inequality \eqref{jeps-est} for $F[j]$
is a consequence of the above bound for $G_\eps$ and of
$$
    \int_0^\infty h[f]\dd x = \frac{1}{2} M_1[f]^2 \,.
$$
Since obviously continuity on $[0,x_0]$ and on $[x_0,1/\eps]$ is also preserved, we have $F_\eps: B\to B$ with the bounded, closed, convex subset
$$
   B := \{j\in C([0,x_0])\times C([x_0,1/\eps]):\, j/v_\eps \ge 0,\, M_1[j/v_\eps]=\mathcal{M}_1,\, |j| \le J\}
$$
    of the Banach space $C([0,x_0])\times C([x_0,1/\eps])$.

From the differential equation in \eqref{eq_1-reg} we obtain the bound
$$
    |F_\eps[j]'(x)| \le \frac{\mathcal{M}_1 J}{\eps^2} + |h[j/v_\eps](x)| \le \frac{3\mathcal{M}_1 J}{2\eps^2} \,,
$$
implying pre-compactness of $F_\eps[B]$ by the Arzela-Ascoli theorem.

It remains to show the continuity of $F_\eps$ with respect to the supremum norm. This is obvious, however, since $F_\eps[j]$ depends on $j$ in a smooth way, and by the 
regularisation all the coefficient functions as well as the $x$-interval are bounded. Therefore the Schauder theorem can be applied and the proof is complete.
\end{proof}

An existence result for \eqref{eq_1s}, \eqref{BC-infty} is achieved by passing to the limit $\eps\to 0$:

\begin{theo}
Let $v$ satisfy \eqref{eq_3s} and \eqref{v-ass}. Then \eqref{eq_1s}, \eqref{BC-infty} has a weak solution.
\end{theo}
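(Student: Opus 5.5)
We pass to the limit $\eps\to 0$ in the regularised solutions $j_\eps$ from Lemma~\ref{lem:ex-reg}, working with $f_\eps := j_\eps/v_\eps\ge 0$, which satisfies $M_1[f_\eps]=\mathcal{M}_1$ and $|v_\eps f_\eps|\le J$.

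\textbf{Step 1: weak formulation for $f_\eps$.} Multiply \eqref{eq_1-reg} by a test function $\Phi\in C^1_B([0,\infty))$ with $\Phi'$ compactly supported and integrate over $[0,x_0]$ and $[x_0,1/\eps]$. The interface terms at $x_0$ cancel only if $j_\eps$ is continuous there, and Lemma~\ref{lem:ex-reg} does not guarantee this. We therefore first show that the jump of $j_\eps$ at $x_0$ tends to $0$. Both one-sided limits are obtained from integrals of $G_\eps(x_0^\pm,\cdot)$, and $G_\eps(x_0^-,0)$ and $G_\eps(x_0^+,1/\eps)$ behave like $\exp(-c\log(1/\eps))\to0$, because $\int z/v_\eps$ diverges logarithmically near the transversal zero. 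The same mechanism should make the integral terms in $j_\eps(x_0^\pm)$ small, provided $h[f_\eps]$ is bounded near $x_0$. If this turns out to be awkward, a cleaner route is to note that the moment identity $M_1[f_\eps]=\mathcal{M}_1$ already encodes the global flux balance, so the interface contribution is recovered in the limit. The remaining Step~1 error terms vanish: the cutoff at $1/\eps$ contributes $\Phi(1/\eps)j_\infty\to\Phi(\infty)j_\infty$ since $\Phi'$ has compact support, and the effect of replacing $v$ by $v_\eps$ is $O(\eps)\int|\Phi'|f_\eps$.

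\textbf{Step 2: uniform bounds and tightness.} This is the main obstacle. The mass bound $\int x f_\eps=\mathcal{M}_1$ gives weak-$*$ compactness of $xf_\eps$ as measures on $[0,\infty]$. We still need two further things: no mass escaping to $\infty$, which would ruin both the term $\mathcal{M}_1\int xf\Phi$ and the identity $M_1[f]=\mathcal{M}_1$, and no concentration at $x_0$.

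For the tail, use $|v f_\eps|\lesssim J$ together with \eqref{v-ass}. For $x>\overline x$, the representation \eqref{eq:g_formula_reg} on $(x_0,1/\eps)$, $-j_\eps(x)=-G_\eps j_\infty+\int_x^{1/\eps}G_\eps h\,dy$, gives $|j_\eps(x)|\le -j_\infty+\int_x^\infty h[f_\eps]$. A pointwise bound alone, $f_\eps\le J/|v|$, is insufficient, since \eqref{v-ass} is only a lower bound on $v$ and does not force $|v|$ to be large. The real mechanism must come from the exponential factor. For $y>x$ we have $G_\eps(x,y)=\exp(-\mathcal{M}_1\int_x^y z/|v_\eps|)\le\exp(-\mathcal{M}_1\int_x^y z^{1-\beta}/A)$, and $\beta>2$ makes this decay only mildly. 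I would combine this with the identity $\int_x^\infty h[f]\le\frac12(\int_{x/2}^\infty yf)\mathcal{M}_1$ to close a Gronwall-type inequality for the tail $T_\eps(x):=\int_x^\infty y f_\eps$. Multiplying the $j$ equation by $x$ and integrating from $R$ to $\infty$ should give $T_\eps(R)\lesssim \int_R^\infty |j_\eps|\,dx/\ldots$, and the aim is $T_\eps(R)\to0$ as $R\to\infty$ uniformly in $\eps$. Near $x_0$, use that $f_\eps\le J/|v_\eps|\sim J/(w|x-x_0|)$; this is not integrable, so I need a weak bound $\int_{|x-x_0|<\delta}xf_\eps\to0$ as $\delta\to0$, uniformly in $\eps$. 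I would obtain it from the representation formula, in which the $G_\eps$ factors give extra decay near $x_0$.

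\textbf{Step 3: passing to the limit.} With tightness in hand, extract $xf_\eps\rightharpoonup xf$ narrowly, with $M_1[f]=\mathcal{M}_1$. The quadratic term $\iint xf_\eps yf_\eps\Phi(x+y)$ converges by narrow convergence of product measures. The linear terms converge since $\Phi'v$ is bounded and compactly supported away from issues, using $v f_\eps$ bounded. Absolute continuity of the limit $f$ follows from the local $L^\infty$ bound away from $x_0$ together with the non-concentration at $x_0$. This yields the weak formulation of Definition~\ref{def:weak_sol}.
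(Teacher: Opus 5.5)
\textbf{Main gap: the tail estimate in Step~2.} This is the only place where the growth hypothesis enters, and you leave it as an unexecuted plan.

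You read \eqref{v-ass} as an upper bound on $|v|$, and so you discard the pointwise bound $f_\eps\le J/|v_\eps|$. As printed, the inequality does read $v\ge -Ax^\beta$, but this is a sign slip. The text requires $v$ to tend to $-\infty$ fast enough. It invokes \eqref{v-ass} to say that $G(x,\infty)$ is well defined, which needs $\int^\infty z/|v(z)|\,\dd z<\infty$. And the paper's proof uses $|v(x)|\ge Ax^\beta$ for $x\ge\overline x$.

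With that hypothesis, the bound you dismissed settles the matter in one line. Since $|v_\eps|\ge|v|$ and $|j_\eps|\le J$, one has $xf_\eps(x)\le (J/A)\,x^{1-\beta}$ on $[\overline x,\infty)$. Hence $\int_R^\infty xf_\eps\,\dd x\le \frac{J}{A(\beta-2)}R^{2-\beta}$ for all $R\ge\overline x$, uniformly in $\eps$. This is exactly where $\beta>2$ is used; the paper phrases it as a uniform bound on $\int_0^\infty x^{\beta/2}f_\eps\,\dd x$. It gives tightness of $\{xf_\eps\}$ and $M_1[f]=\mathcal{M}_1$ in the limit.

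Under your literal reading, the mechanism you propose cannot work:
\begin{itemize}
\item The bound $G_\eps(x,y)\le\exp\big(-\frac{\mathcal{M}_1}{A}\int_x^y z^{1-\beta}\,\dd z\big)$ tends to $1$ as $x\to\infty$, uniformly in $y$, so it gives no decay at all.
\item Your representation yields $xf_\eps(x)\le \frac{x}{|v_\eps(x)|}\big(-j_\infty+\mathcal{M}_1T_\eps(x/2)\big)$. This can only make $T_\eps(R)$ small if $x/|v(x)|$ is integrable at infinity, i.e.\ under a lower bound on $|v|$.
\end{itemize}
The literal hypothesis is satisfied, for example, by $v(x)=(x_0-x)/(1+x)$. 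Nothing in your sketch prevents mass from escaping to $x=\infty$ in that case.

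\textbf{Secondary points: the interface at $x_0$ and concentration there.} Both concerns are real, and the paper passes over them quickly, but your write-up does not settle them either.

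In Step~1, the interface term $\Phi(x_0)\big(j_\eps(x_0^-)-j_\eps(x_0^+)\big)$ is indeed absent from \eqref{weak-reg}. It is strictly positive for every $\eps>0$, since $j_\eps(x_0^-)\ge G_\eps(x_0,0)v(0)f_0>0\ge j_\eps(x_0^+)$. Your ``cleaner route'' is circular: the moment identity of Lemma~\ref{lem:ex-reg} is obtained by integrating \eqref{eq_1-reg} across $x_0$ as if $j_\eps$ were continuous.

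Your first route is the viable one, and it can be completed. Write $h[f](y)=\int_0^{y/2}(y-z)f(y-z)\,zf(z)\,\dd z$ and use $f_\eps\le J/|v|$ on $[0,3x_0/4]$; this shows $h[f_\eps]$ is bounded on $[0,3x_0/2]$ uniformly in $\eps$. Combine this with the $\eps$-version of Lemma~\ref{lem:G}: near $x_0$, the factor $|x-x_0|^c/|y-x_0|^c$ becomes $(|x-x_0|+\eps)^c/(|y-x_0|+\eps)^c$, up to factors bounded uniformly in $\eps$. The representation \eqref{eq:g_formula_reg} then gives two things:
\begin{itemize}
\item $j_\eps(x_0^\pm)\to0$, so the interface term vanishes in the limit.
\item Near $x_0$, $f_\eps(x)\le C\big(1+(|x-x_0|+\eps)^{c-1}\big)$ for $c\ne1$, with a logarithmic bound for $c=1$.
\end{itemize}
The second bound is the uniform integrability you need to exclude concentration at $x_0$. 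It is also what yields a limit $f\in L^1$ rather than a measure with an atom at $x_0$, which the paper simply asserts.
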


\begin{proof}
Let $j_\eps$ be a solution of \eqref{eq_1-reg} as in Lemma \ref{lem:ex-reg}. Then $f_\eps := j_\eps/v_\eps$ satisfies
\begin{equation}
\begin{array}{ll}
&\partial_x (v_\eps f_\eps) + x\mathcal{M}_1 f_\eps = h\left[f_\eps\right] \,, \label{f-reg} \\ 
& f_\eps(0)=\frac{v(0)f_0}{v_\eps(0)} \,,\qquad f_\eps(1/\eps) = \frac{j_\infty}{v_\eps(1/\eps)}\,,
\end{array}
\end{equation}
and, with $f_\eps(x):=0$ for $x>1/\eps$, also the weak formulation
\begin{eqnarray}
   && \Phi(\infty)j_\infty - \Phi(0)v(0)f_0 -\int_0^\infty \Phi'(x) v_\eps(x)f_\eps(x)\dd x  \nonumber\\
   && = \frac{1}{2} \int_0^\infty \int_0^\infty xf_\eps(x)y f_\eps(y)\Phi(x+y)\dd x\,\dd y -  \mathcal{M}_1\int_0^\infty xf_\eps(x)\Phi(x) \dd x \,,  \label{weak-reg}     
\end{eqnarray}
with $\Phi$ as in Definition \ref{def:weak_sol} and with $\eps$ small enough such that $1/\eps$ is outside the support of $\Phi'$.

The properties of $v$ and of the solution of the regularised problem shown in Lemma \ref{lem:ex-reg} imply
$$
   \int_0^\infty f_\eps\,\dd x \le \frac{x_0}{2} \frac{J}{(x_0-x_0/2)\min_{[0,x_0/2]} w} + \int_{x_0/2}^\infty \frac{x}{x_0/2} f_\eps \,\dd x 
   \le \frac{J}{\min_{[0,x_0/2]} w} + \frac{2\mathcal{M}_1}{x_0} \,,
$$
and, thus, uniform-in-$\eps$ boundedness of $f_\eps$ in $L^1(\R^+)$. Since $M_1[f_\eps]=\mathcal{M}_1$, $\{f_\eps\}_\eps$ is a tight family of measures,
and there exists by the Prokhorov theorem a weak accumulation point $f\in L^1(\R^+)$ with $xf\in L^1(\R^+)$. 

Since $xf_\eps$ appears in the weak formulation, we need a little more to pass to the limit there. With $\delta := \beta/2-1>0$ we have
\begin{eqnarray*}
  \int_0^\infty x^{1+\delta}f_\eps \,\dd x &=& \int_0^{x_0/2} x^{1+\delta}f_\eps \,\dd x + \int_{x_0/2}^{\overline x} x^{1+\delta}f_\eps \,\dd x + \int_{\overline x}^\infty x^{1+\delta}f_\eps \,\dd x \\
  &\le& \frac{J (x_0/2)^{1+\delta}}{\min_{[0,x_0/2]} w} + {\overline x}^\delta \mathcal{M}_1 + \frac{J}{A} \int_{\overline x}^\infty x^{1+\delta-\beta}\dd x \\
  &=& \frac{J (x_0/2)^{\beta/2}}{\min_{[0,x_0/2]} w} + {\overline x}^{\beta/2-1} \mathcal{M}_1 + \frac{J{\overline x}^{1-\beta/2}}{A(\beta/2-1)} \,.
\end{eqnarray*}
This uniform estimate implies that also $\{xf_\eps\}_\eps$ is tight. We apply again the Prokhorov theorem to a sequence $xf_{\eps_n}$, where $f_{\eps_n}\rightharpoonup f$.
For a subsequence, $xf_\eps$ converges weakly, and the limit has to be $xf$. Now we are ready to pass to the limit in \eqref{weak-reg}. Since $v_\eps$ converges strongly,
we have $v_\eps f_\eps\rightharpoonup vf$. Finally, we use that the tensor product of weakly converging measures converges weakly to the tensor product of the limits, which allows
to pass to the limit on the right hand side of \eqref{weak-reg}.
\end{proof}

\subsection{Qualitative properties of steady states}\label{sec:qualitative}

Since the bound \eqref{jeps-est} remains valid in the limit $\eps\to 0$, solutions of \eqref{eq_1s}, \eqref{BC-infty} satisfy
\be\label{f-est}
     0 \le f(x) \le \frac{J}{|v(x)|}  \,,\qquad x\ge 0\,. 
\ee
Therefore (by \eqref{eq_3s}) $f$ has at most one singularity at $x=x_0$. However, close to $x_0$ the estimate is too pessimistic, since we know that $f$ is integrable.
On the other hand, the estimate is sharp (up to the constant) concerning the  as $x\to\infty$, as long as $j_\infty<0$ in \eqref{BC-infty}. For $j_\infty=0$ we refer
to the previous section.

We shall get more precise information on the behaviour around $x=x_0$ and start with the mild formulation, formally passing to the limit $\eps\to 0$ in \eqref{eq:g_formula_reg}:
\begin{equation*}
  v(x)f(x) = \left\{ \begin{array}{ll} 
  G(x,0)v(0)f_0 + \int_0^x G(x,y)h\left[f\right](y)\dd y \quad & \text{for }0\le x\le x_0\,,\\
  G(x,\infty)j_\infty -\int_x^\infty G(x,y) h\left[f\right](y)\dd y & \text{for }x>x_0\,,
       \end{array}\right.
\end{equation*}
with 
$$
  G(x,y) := \exp\left(-\mathcal{M}_1 \int_y^x \frac{z}{v(z)}\dd z\right) \,.
$$
Note that $G(x,\infty)$ is well defined by \eqref{v-ass}. We need properties of $G$ and of $h[f]$:

\begin{lemma}\label{lem:G}
Let $v$ satisfy \eqref{eq_3s}. Then there exists a strictly positive function $G_0\in C((\R^+)^2)$, such that
$$
  G(x,y) = G_0(x,y)\left|\frac{x-x_0}{y-x_0}\right|^c   \quad\text{for } x,y\ge 0 \,,\qquad\text{with } c = \frac{\mathcal{M}_1 x_0}{w(x_0)} \,.
$$
\end{lemma}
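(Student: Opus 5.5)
We isolate the singular part of the integrand $z/v(z)$ at $z=x_0$. By \eqref{eq_3s}, $v(z)=(x_0-z)w(z)$ with $w\in C^1$ and $w>0$, so we write
$$
  \frac{z}{v(z)} = \frac{z}{(x_0-z)w(z)} = -\frac{x_0}{w(x_0)}\,\frac{1}{z-x_0} + r(z)\,,\qquad
  r(z) := \frac{1}{x_0-z}\left(\frac{z}{w(z)} - \frac{x_0}{w(x_0)}\right).
$$
Since $z\mapsto z/w(z)$ is $C^1$ on $[0,\infty)$, the difference quotient $r$ extends continuously to $z=x_0$, with value $-\frac{\dd}{\dd z}(z/w(z))|_{z=x_0}$. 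Hence $r\in C([0,\infty))$, and in particular $r$ is locally integrable on $[0,\infty)$.

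Next we integrate from $y$ to $x$ for $x,y\ne x_0$. The singular part integrates to $-\frac{x_0}{w(x_0)}\ln\left|\frac{x-x_0}{y-x_0}\right|$. This is legitimate even when the interval $[y,x]$ contains $x_0$, in the sense that we use the antiderivative $\ln|z-x_0|$ piecewise. In the paper's usage $x$ and $y$ always lie on the same side of $x_0$, which is exactly where $G$ is actually defined as a convergent integral. Multiplying by $-\mathcal{M}_1$ and exponentiating gives
$$
  G(x,y) = \left|\frac{x-x_0}{y-x_0}\right|^{c}\exp\left(-\mathcal{M}_1\int_y^x r(z)\,\dd z\right),\qquad c=\frac{\mathcal{M}_1x_0}{w(x_0)}.
$$
We therefore set $G_0(x,y):=\exp\left(-\mathcal{M}_1\int_y^x r(z)\,\dd z\right)$. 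It is strictly positive as an exponential of a finite real number. It is continuous on $(\R^+)^2$, including the lines $x=x_0$ and $y=x_0$, because $r$ is continuous, so $(x,y)\mapsto\int_y^x r$ is continuous (indeed $C^1$). The factorization then extends by continuity to $x=x_0$ or $y=x_0$, where $G$ equals $0$ or $\infty$ consistently with the power factor.

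The only delicate point is the continuity of $r$ at $x_0$, which is precisely where the $C^1$ regularity of $w$ and the transversality $w(x_0)>0$ enter. A second, minor point concerns interpreting $G$ for $x,y$ on opposite sides of $x_0$. There we simply take the factorized formula as the definition, which agrees with the principal-value interpretation of the integral.
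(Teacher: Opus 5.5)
Your proposal is correct and follows the paper's proof. Both split $z/v(z)$ into $\frac{x_0}{w(x_0)}\frac{1}{x_0-z}$ plus the difference-quotient remainder, integrate the singular part explicitly to a logarithm, and set $G_0=\exp\bigl(-\mathcal{M}_1\int_y^x r(z)\,\dd z\bigr)$. You also spell out two points the paper leaves implicit: why the remainder is continuous at $x_0$ (differentiability of $z/w(z)$), and how to read $G$ when $x$ and $y$ lie on opposite sides of $x_0$.
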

\begin{proof}
The integral in the exponent of $G(x,y)$ can be written as
\begin{eqnarray*}
\int_y^x \frac{z}{v(z)}\dd z &=& \frac{x_0}{w(x_0)} \int_y^x \frac{1}{x_0-z}\dd z 
+ \int_y^x \frac{1}{x_0-z}\left( \frac{z}{w(z)} - \frac{x_0}{w(x_0)}\right)\dd z \\
&=& \frac{x_0}{w(x_0)}\log\left|\frac{y-x_0}{x-x_0}\right| + b(x,y) \,,
\end{eqnarray*}
with a continuous function $b$. This follows from an explicit integration and the smoothness assumption on $w$, and it completes the proof with 
$G_0(x,y) = \exp(-\mathcal{M}_1 b(x,y))$.
\end{proof}

\begin{remark}
A similar treatment for $G(x,\infty)$ shows that it also behaves like $|x-x_0|^c$ close to $x=x_0$.
\end{remark}

By \eqref{f-est} the integrand of $h[f](y)$ has at most two singularities at $x_0$ and at $y-x_0$. As long as these are different, i.e. $y\ne 2x_0$, they
are integrable, since $f$ is. Therefore $h[f](y)$ has at most one (integrable) singularity at $y=2x_0$. So this does not interfere with the singularity of $G(x,y)$
at $y=x_0$.

Collecting our results, we first observe that the terms resulting from the boundaries contribute to the solution $f$ a behaviour like $|x-x_0|^{c-1}$. For the integral
terms, there are 3 cases:\\
a) If $c<1$, then $|y-x_0|^{-c}$ is integrable at $y=x_0$, and there results another contribution like $|x-x_0|^{c-1}$.\\
b) If $c=1$, then we obtain a contribution like $-\log|x-x_0|$.\\
c) If $c>1$, the contribution from the integral does not have a singularity.\\
Obviously these contributions are always the dominant ones. We collect our results:

\paragraph{behaviour of solutions of \eqref{eq_1s}, \eqref{BC-infty} close to $x=x_0$:} The behaviour depends on the value of $c= \frac{\mathcal{M}_1 x_0}{w(x_0)}$.
\begin{itemize}
\item If $c<1$, $f$ has a singularity with $f(x) \approx C|x-x_0|^{c-1}$ as $x\to x_0$.
\item If $c=1$, $f$ still has a singularity with $f(x) \approx -C\log|x-x_0|$ as $x\to x_0$.
\item If $c>1$, $f$ is bounded.
\end{itemize}

\section{Numerical results}\label{sec:num}

As an illustration of some of our analytical results, numerical simulations of the time dependent problem have been carried out. The discretization of \eqref{eq:main}
with respect to the aggregate size $x$ involves restriction to a finite maximal size $X>x_0$ (assuming \eqref{eq_3s}), chosen together with the grid spacing $\Delta x$ such that for 
$$
   x_j := j\Delta x \,,\quad j=0,\ldots, J\,, 
$$
we have $x_J = X$ and $x_{j_0} = x_0$.
With the time discretization
$$
   t_k := k\Delta t \,,\quad k\ge 0 \,,
$$
we denote the approximation of $f( t_k,x_j)$ by $f_j^k$. We choose an explicit time
discretization with a conservative discretization of the transport:
\begin{align*}
\frac{{f}_{j}^{k+1} - {f}_{j}^k}{\Delta t} + \frac{F_{j+1/2}^k - F_{j-1/2}^k}{\Delta x} &= \frac{\Delta x}{2} \sum_{i=0}^j K_{j-i,i} f_{j-i}^k f_i^k - \Delta x \sum_{i=0}^{J-j} K_{j,i} f_{j}^k f_i^k =: Q_j^k 
\end{align*}
with the upwinding flux  
\begin{align*}
    F_{j+1/2}^k &= \begin{cases} 
        v(x_j) f_j^k & \text{for } j < j_0\,, \\
        v(x_{j+1}) f_{j+1}^k& \text{for } j \ge j_0\,,
    \end{cases}       
\end{align*}
and with the boundary conditions
\begin{equation}
    f_0^k = f_0 \,, \quad f_J^k = 0\,, \qquad k>0 \,,
\end{equation}
where the latter approximates \eqref{BC-inf} with $j_\infty = 0$. We observe that the restriction $i \le J-j$ in the loss term excludes coagulation events producing aggregates of size larger than $X$.

For stability, the time step has to be chosen small enough such that the CFL condition 
\begin{equation}
    \frac{\Delta x}{\Delta t} \geq \max_{0\le x\le X} |v(x)|
\end{equation}
is satisfied. The discretization is structure preserving in the sense that the discretization of the transport conserves the number of aggregates (locally) by being conservative, and the discretization of the coagulation conserves the discrete total mass
\begin{equation}
    \Delta x\sum_{j=0}^J x_j f_j^k \,,
\end{equation}
since
\begin{equation}
    \begin{split}
    \sum_{j=0}^J x_j Q_j^k &= \frac{\Delta x}{2} \sum_{j=0}^J x_j \sum_{i=0}^j K_{j-i,i} f_{j-i}^k f_i^k - \Delta x \sum_{j=0}^J x_j \sum_{i=0}^{J-j} K_{j,i} f_{j}^k f_i^k\\ 
    &=  \frac{\Delta x}{2} \sum_{j=0}^J f_j^k \sum_{i=0}^{J-j} (x_i + x_j) K_{i,j} f_i^k\, - \Delta x \sum_{j=0}^J x_j \sum_{i=0}^{J-j} K_{j,i} f_{j}^k f_i^k = 0\,,
    \end{split}
\end{equation}
where in the last equality we have used the symmetry of $K_{i,j}$.

In the following, simulation results will be presented for $K(x,y)=xy$,
$f_0=1$, and various choices of the transport velocity $v(x)$. For the discretization, $X=20$ has been used as well as various choices of $\Delta x$.

\paragraph{Comparison with an explicitly known steady state:} As shown in Section \ref{sec:inverse_problem}, the choice
$$
  v(x) = \frac{1}{12}\left(6+6x-3x^2-x^3\right)
$$
corresponds to the steady state $f_\infty(x) = e^{-x}$. Figure \ref{fig:num_sol_multiplicativekernel} shows results of a simulation with this transport velocity and with initial data $f(x,0) = e^{- (x - 6)^2}$. The numerical steady state is close to $f_\infty$, except for a kink at the zero $x_0$ of the transport velocity. This is a numerical artefact, which is understandable because of the singularity at $x=x_0$ of the steady state equation. As an indication of the convergence of the numerical method, the discretization error at the steady state has been computed for five different grids. The influence of the above mentioned kink has been diminished by using a discrete $L^1$-norm for measuring the error:

\begin{center}
\begin{tabular}{|c|c|}
\hline
$\Delta x$ & $L^1$-error \\ \hline
$1.0\mathrm{e}^{-1}$ & 0.035 \\
$6.0\mathrm{e}^{-2}$ & 0.021 \\
$3.6\mathrm{e}^{-2}$ & 0.013 \\
$2.2\mathrm{e}^{-2}$ & 0.008 \\
$1.3\mathrm{e}^{-2}$ & 0.0049 \\ \hline
\end{tabular}
\end{center}
\begin{figure}
    \centering
    \includegraphics[width=0.85\linewidth]{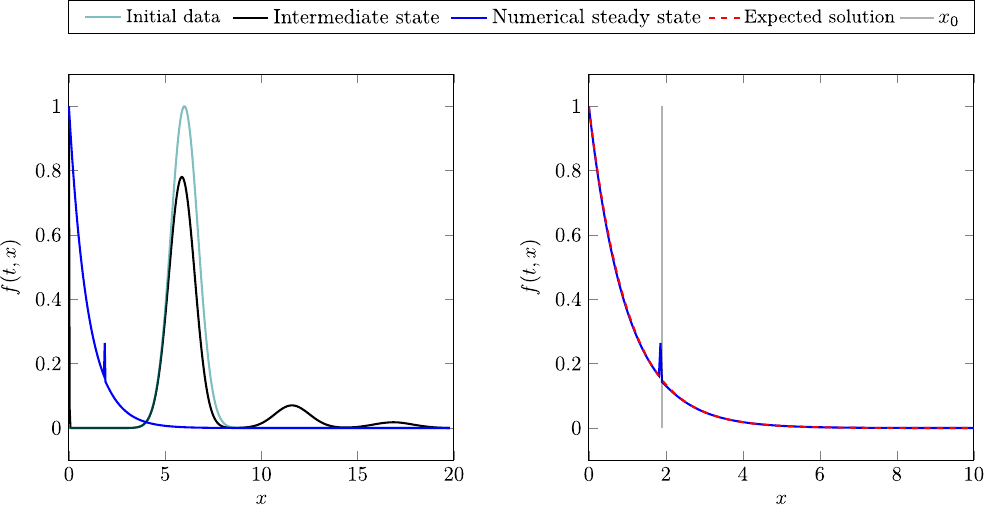}
    \caption{{\bf Snapshots of the numerical solution for the case of an explicitly known steady state.} On the left are depicted the initial data (green), an intermediate time (black), and the numerical steady state (blue). The intermediate state shows that the initial peak is moved to the left by transport, and additional peaks are created by coagulation. The kink at $x=x_0$ of the numerical steady state is a numerical artefact due to the singularity of the steady state equation. On the right, a comparison between the numerical steady state (blue) and the exact steady state (red dashed) is displayed.}
    \label{fig:num_sol_multiplicativekernel}
\end{figure}
%    \begin{figure}[t]
%    \hspace{1cm}
%        \centering
        % Second figure with subfigures
%        \begin{subfigure}{0.4\textwidth}
%            \centering
%            \includegraphics[width=\textwidth]{error_multiplicative.pdf}
%        \end{subfigure}
%        \hfill
%        \begin{subfigure}{0.4\textwidth}
%            \centering
%            \includegraphics[width=\textwidth]{error_multiplicativelog.pdf}
%        \end{subfigure}
%        \hspace{1.3cm}
%        \caption{\textbf{Time evolution of the $\boldsymbol{L^1}$-discrete error norm for $\boldsymbol{K(x,y)=xy}$. (a)} Discrete \( L^1 \)-error in linear scale. The error decreases with respect to smaller values of $\Delta x$. The results are showed for $\Delta x = 0.12$ (blue dashed line), $\Delta x = 0.06$ (green dashed line) and $\Delta x = 0.03$ (gray dashed line). \textbf{(b)} Discrete \( L^1 \)-error in semi-log scale. As in the case of the linear coagulation kernel, the error decreases over time, but it does not follow a clear decay trend.}
%        \label{fig:error_multiplicative}
%    \end{figure}

\paragraph{The behaviour close to \texorpdfstring{$\boldsymbol{x_0}$}{TEXT}:}
In Section~\ref{sec:qualitative}, an analytical investigation of the behaviour of steady states around the singularity at $x=x_0$ has been carried out. This is continued here by numerical simulations, considering different cases of the value of the critical parameter $c$ 
(see Section \ref{sec:qualitative} for its definition).
Since the speed $|v|$ increases rapidly with respect to $x$, we restrict the spatial domain to $X = 7$. This allows the code to run in a reasonable time while preventing the CFL condition from imposing an excessively restrictive constraint on the time step $\Delta t$.
\begin{figure}[htbp]
    \centering
    \vspace{-0.5cm}
    \hspace{1em}
    \begin{subfigure}[b]{0.45\textwidth}
        \includegraphics[width=\textwidth]{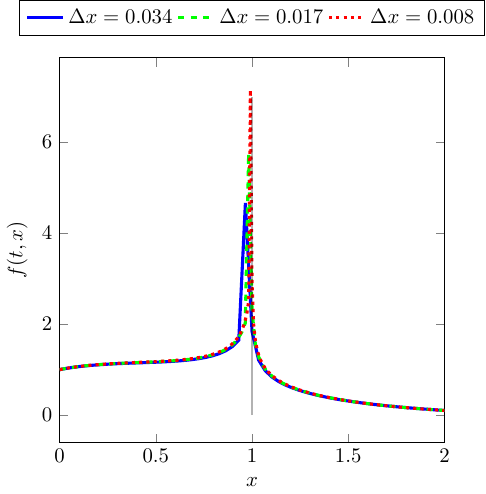}
        \caption{}
    \end{subfigure}
    \hfill
    \begin{subfigure}[b]{0.465\textwidth}
        \includegraphics[width=\textwidth]{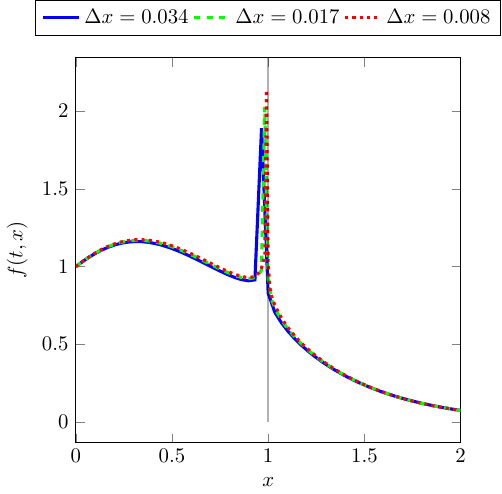}
        \caption{}
    \end{subfigure}
    \hspace{1em}
    
    \vspace{0.5em}
    \hspace{1em}
    \begin{subfigure}[b]{0.45\textwidth}
        \includegraphics[width=\textwidth]{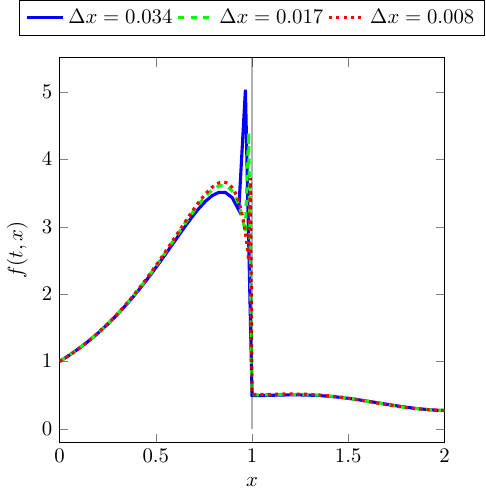}
        \caption{}
    \end{subfigure}
    \hfill
    \begin{subfigure}[b]{0.405\textwidth}
        \includegraphics[width=\textwidth]{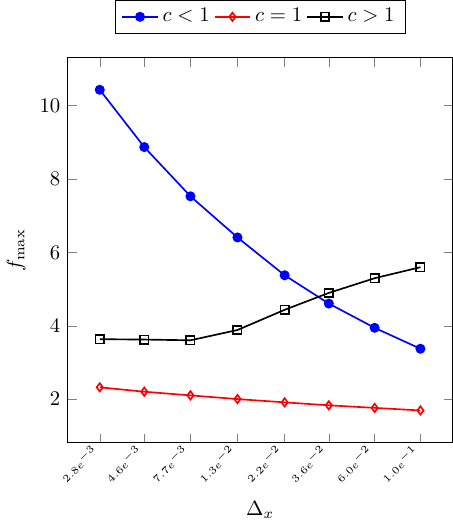}
        \caption{}
    \vspace{-0.3em}
    \end{subfigure}
    \hspace{2.5em}
    
\caption{\textbf{behaviour of numerical steady states near \( \boldsymbol{x_0}\).} Numerical steady states for different values of $c$, computed on grids with \( \Delta x = 0.034 \) (blue solid line), \( \Delta x = 0.017 \) (green dashed line), and \( \Delta x = 0.008 \) (red dotted line). \textbf{(a)} \( c < 1 \),  
\textbf{(b)} \( c = 1 \), \textbf{(c)} \( c > 1 \). \textbf{(d)} Peak values of the numerical steady states for different values of \( \Delta x \), for \( c < 1 \) (blue), \( c = 1 \) (red), and \( c > 1 \) (black). }

    \label{fig:qualitative}
\end{figure}

\noindent\textbf{Case \texorpdfstring{$\boldsymbol{c<1}$}
{TEXT}:} Choosing \( v(x) = (1 - x)(1 + x^{3/2}) \), one easily computes \( c = \frac{1}{\sqrt{2}} < 1 \), hence the steady state solution is expected to behave as \( C |x - x_0|^{c - 1} \) near \( x = x_0 \). Consistent with this analytical result, the numerical solution approximates a singularity as \( x \to x_0 \), as shown in Figure~\ref{fig:qualitative}a. It displays the numerical steady state for decreasing values of \( \Delta x \), showing the formation of a sharp peak near \( x_0 \). The maximum value of this peak increases as the $\Delta x$ decreases, as also shown in Figure~\ref{fig:qualitative}d, consistent with the presence of a local singularity in $x_0$. A video with the time evolution of the numerical solution is available \href{https://doi.org/10.6084/m9.figshare.29590550}{here}.

\vspace{0.2cm}

\noindent\textbf{Case \texorpdfstring{$\boldsymbol{c=1}$}
{TEXT}:} Choosing \( v(x) = (1-x)(1+x^2)/2 \), it is easily checked that \( c = 1 \), hence the steady state solution is expected to behave as \(  -C\log|x-x_0| \) near \( x = x_0 \). As in the previous case, the behaviour of the numerical solution is consistent with the analytical results (see Figure~\ref{fig:qualitative}b), with a slower
growth of the peak (see Figure~\ref{fig:qualitative}d) as expected.
A video with the time evolution of the numerical solution is available \href{https://doi.org/10.6084/m9.figshare.29590508}{here}.

\vspace{0.2cm}

\noindent\textbf{Case \texorpdfstring{$\boldsymbol{c>1}$}
{TEXT}:} With \( v(x) = (1-x)(4-3x+x^2) \) we have \( c =\sqrt{2} >1 \), hence the steady state solution is expected to be bounded near \( x = x_0 \). Although the numerical solution  (see Figure~\ref{fig:qualitative}c) exhibits a peak for coarser grids,
we seem to observe convergence to a jump discontinuity at $x=x_0$ upon refinement. A video with the time evolution of the numerical solution is available \href{https://doi.org/10.6084/m9.figshare.29591678}{here}.

 \section*{Conclusion}

We were originally motivated to develop a qualitative model of autophagy~\cite{delacour2022mathematical}, a phenomenon in which the size of aggregates (called autophagosomes) is regulated (see~\cite{jin2014regulation}). We  introduced a variant of the growth-coagulation equation in which polymerisation dominates for small sizes and depolymerisation dominates for large sizes. We obtained the existence of steady states for this equation with a multiplicative coagulation kernel as soon as depolymerisation is strong enough to counterbalance the well-known gelation effect of this kernel.  This is a mirror effect  of the conclusion in~\cite{calvo2018long}, where a steady state is obtained for a non linear growth-fragmentation equation as soon as growth dominates at large sizes, in order to counterbalance fragmentation, and decay dominates at small sizes to feed larger aggregates.  We have also investigated the relationship between the decay rate and the steady profile for large sizes. Our numerical simulations suggest that the solution converges to the steady state obtained theoretically; the proof of this convergence remains to be established.

\printbibliography

\end{document}